\numberwithin{equation}{section}
\theoremstyle{plain}
\newtheorem{theorem}{Theorem}[section]
\newtheorem{lemma}[theorem]{Lemma}
\newtheorem{remark}[theorem]{Remark}
\newtheorem{proposition}[theorem]{Proposition}
\newcommand{\N}{\mathbb{N}}
\newcommand{\Z}{\mathbb{Z}}
\newcommand{\C}{\mathbb{C}}
\newcommand{\g}{\mathfrak{g}}
\newcommand{\gl}{\mathfrak{gl}} 
\newcommand{\h}{\mathfrak{h}}
\newcommand{\s}{\mathfrak{sp}_{N}}
\newcommand{\Ha}{\mathfrak{H}_{N}}
\renewcommand{\H}{\mathbf{h}}
\begin{document}

		\title[Representations of  Hamiltonian Lie algebras]{Representations of  Hamiltonian Lie algebras} 
\author{Vyacheslav Futorny, Santanu Tantubay}
\address{Shenzhen International Center for Mathematics, Southern University of Science and Technology, China}
\email{vfutorny@gmail.com}
\email{1mathsantanu@gmail.com}
	\maketitle
	\begin{abstract}
	    We consider the Shen-Larsson functor from the category of   modules for the symplectic Lie algebra $\s$ to the category of  modules for the Hamiltonian Lie algebra and show that it preserves the irreducibility except in the finite number of cases. The obtained irreducible modules for the Hamiltonian Lie algebra are cuspidal, whose weight multiplicities equal the dimension of the corresponding module of the symplectic Lie algebra. This extends well-known results for other Cartan type Lie algebras to the Hamiltonian case.\\
        
	\end{abstract}
	\section{Introduction}
Cartan type Lie algebras $W_n^+$ (respectively $W_n$) 
of polynomial (respectively Laurent polynomial) vector fields on $n$-dimensional affine space  (respectively torus), and its subalgebras of types $S$, $K$ and $H$ are classical examples of infinite dimensional Lie algebras. Significant advances were made in the last 10 years in the study of various categories of representations for Witt algebras $W_n$ and $W_n^+$, see \cite{BF16}, \cite{XL23} and references therein. Representations of type $S$ Lie algebras were studied in \cite{BT} and \cite{DGYZ19}.  

Jet modules for the Hamiltonian Lie algebra $H^+_n$ were studied in \cite{T18}. An important step towards the classification of all quasi-finite modules was made in \cite{JL} for $n=2$, which is the case of Virasoro-like algebra.  

In current paper we consider the case of an arbitrary even-dimensional torus and construct irreducible representations of the corresponding Hamiltonian Lie algebra 
by applying the Shen-Larsson functor $F^{\alpha,\beta}$ ($\alpha,\beta\in \C^N$) to arbitrary irreducible representations of symplectic Lie algebra of the same rank. We show that this functor preserves irreducibility except in the finite number of cases, which extends the results of \cite{R96}, \cite{GZ11}, \cite{LZ15} and \cite{Ru} in the case of  Witt algebras.

Denote by $V(\delta_k)$, $0\leq k \leq n$ irreducible fundamental representation of 
$\s$. Our main result is the following.
  
\medskip

 {\bf Main Theorem.}
    Let $V$ be an irreducible $\s$ module and $\alpha,\beta \in \C^N$. Then $F^{\alpha,\beta}(V)$ is irreducible if and only if one of the following conditions holds:
    \begin{itemize}
    \item[(i)] $\dim V=\infty$;
    \item[(ii)] $\dim V < \infty$ and 
    $V\ncong V(\delta_k)$ for $1\leq k\leq n$;
    \item[(iii)]  $V\simeq \C$ and $\alpha\notin \Z^N$.
\end{itemize}

In particular, it answers some questions asked in  \cite[Remark 4.11]{PSTZ23}.

\section{Hamiltonian Lie algebra}	
 We denote by $\Z,\; \Z_+,\; \N,\; \C,\; \C^*$ the sets of integers, non-negative integers, positive integers, complex numbers, non-zero complex numbers respectively.  Let $N$ be a fixed positive integer, and $\Z^N,\;\C^N$ are $N$ copies of $\Z,\;\C$ respectively. Let $\bm{(}-,-\bm{)}$ be the standard bilinear form on $\C^N$. Denote by $\gl_N$ be the Lie algebra of all $N\times N$ matrices with the standard basis $\{e_{i,j}:1\leq i,j\leq N\}$. For any Lie algebra $\g$, let $\g$-mod be the category of all modules over $\g$.

 \subsection{Symplectic Lie algebra.}
  Let $N=2n$. The symplectic Lie algebra $\s$ is a Lie subalgebra of $\gl_N$, spanned by $h_i:=e_{i,i}-e_{n+i,n+i}$, $X_{\epsilon_i-\epsilon_j}:=e_{i,j}-e_{n+j,n+i}$,  $X_{\epsilon_k+\epsilon_{l}}:=e_{k,n+l}+e_{l,n+k}$, $X_{-\epsilon_k-\epsilon_l}:=e_{n+k,l}+e_{n+l,k}$, $1\leq i,j\leq n$, $i\neq j$, $1\leq k\leq  l\leq n$, with the Cartan subalgebra $\h=\C\{h_i:1\leq i\leq n\}$ and $\epsilon_i\in \h^*$ such that $\epsilon_i(h_j)=\delta_{i,j}$ for $1\leq i,j\leq n$. Let $\Pi=\{\alpha_1:=\epsilon_1-\epsilon_2, \dots, \alpha_{n-1}:=\epsilon_{n-1}-\epsilon_{n}, \alpha_n:=2\epsilon_n\}$ be the set of simple roots of $\s$ and $\Pi^\vee=\{\alpha_1^{\vee}:=h_1-h_2,\dots, \alpha_{n-1}^\vee:= h_{n-1}-h_{n}, \alpha_n^{\vee}=h_n\}$ be the set of co-roots. Let $\{\delta_i\in \h^*:1\leq i\leq n\}$ be the set of fundamental weights with the property $\delta_i(\alpha_j^\vee)=\delta_{i,j}$ for $1\leq i,j\leq n$. Denote by $\Z_{+} \Pi$ the free monoid generated by $\Pi$.
 Let us define the height function $H:\Z_{+} \Pi \rightarrow \Z_{+}$ by $$H(\sum_{i=1}^na_i\alpha_i)=\sum_{i=1}^na_i,$$ and this function will be used in the paper. We see that $H(\epsilon_{i}-\epsilon_{i+1})=1$ and $H(\epsilon_i+\epsilon_j)=2n-(i+j)+1$ for $1\leq i,j\leq n$.

\subsection{Witt  and Hamiltonian Lie algebras.}
 Let $A_N=\C[t_1^{\pm 1},$
$\dots, t_N^{\pm 1}]$ be the Laurent polynomial ring in $N$ variables. Then the space $W_N=Der(A_N)$ of all derivations of $A_N$  is the Witt Lie algebra with a $\C$ basis $\{t^rd_i:r\in \Z^N,i\in \{1,\dots,N \}\}$, where $d_i=t_i\frac{\partial}{\partial t_i}$ for $1\leq i\leq n$. Denoting $D(u,r)=\sum_{i=1}^Nu_it^rd_i$ for $u\in \C^N,\; r\in \Z^N$, we then have $[D(u,r),D(v,s)]=D(w,r+s)$, where $w=\bm{(}u,s\bm{)}v-\bm{(}v,r\bm{)}u$ for $u,v\in \C^N,\; r,s\in \Z^N$. Now, if $N=2n$, for $r=(r_1,\dots,r_n,r_{n+1}\dots r_{2n})^t\in \Z^N$ we define $\bar{r}=(r_{n+1},\dots, r_{2n},-r_1,\dots -r_{n})^t\in \Z^N$ and $\H_r:=D(\bar{r},r)$, where $(r_1,\dots,r_n,r_{n+1}\dots r_{2n})^t$ is the transpose of the $2n\times1$ matrix $(r_1,\dots,r_n,r_{n+1}\dots r_{2n})$. We see that $\bm{(}\bar{r},s\bm{)}=-\bm{(}\bar{s},r\bm{)}$ for any two $r,s\in \Z^N$. 

The $\Z^N$-graded Hamiltonian Lie algebra is defined as
$$\Ha=span_\C \{\H_r: r (\neq 0)\in \Z^N  \} \oplus_{i=1}^N \C d_i,$$ where $[\H_r,\H_s]=\bm{(}\bar{r},s\bm{)}\H_{r+s}$ for $r,s\in \Z^N$, and $d_i, i=1, \ldots, n$ are the degree derivations (cf. \cite{R23}). Viewing $r\in \Z^N$ as a column vector, we have $r\bar{r}^t\in \s$. 

\subsection{Shen-Larsson functor.}
For $\alpha,\beta \in \C^N$ we define the functor
$$F^{\alpha,\beta }:\s-\text{mod}\rightarrow \Ha-\text{mod}$$ by $ F^{\alpha,\beta }(V)=V\otimes A_N$, with the module action given by:
\begin{equation}
    \H_r(v\otimes t^s)=(\bm{(}\bar{r},s+\alpha\bm{)}I+r\bar{r}^t)v\otimes t^{r+s}, \end{equation}
    \begin{equation}
    d_i(v\otimes t^s)=(s_i+\beta_i)v\otimes t^s,
\end{equation}
where $s\in \C^N$ and $t^s:=t_1^{s_1}\ldots t_N^{s_N}$.
Here $I$ is the identity operator of $V$. We say that $F^{\alpha,\beta}$ extends the $\s$-module $V$ to $\Ha$-module $F^{\alpha,\beta}(V)$. 

\begin{remark}\label{Rmk1}
     For any $\gamma\in \Z^N$, $\alpha\in \C^N$ and irreducible $\s$-module $V$ we have the isomorphism $F^{\alpha,\beta}(V)\cong F^{(\alpha+\gamma),(\beta+\gamma)}(V)$ given by: $v\otimes t^r\rightarrow v\otimes t^{r-\gamma}$. 
 \end{remark} 

Our first main result is the following theorem which shows that the extension of an irreducible module remains irreducible except possibly a finite number of finite dimensional cases.
 
\begin{theorem}\label{thm1}
Let $\alpha,\beta\in \C^N$ and $V$ an irreducible module over $\s$. Then $F^{\alpha,\beta}(V)$ is an irreducible   $\Ha$-module if $V$ is not isomorphic to one of finite dimensional fundamental modules $V(\delta_k)$, where $k=0,1,2,\dots, n$ with $\delta_0=0$.
\end{theorem}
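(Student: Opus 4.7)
My plan is to prove that every nonzero $\Ha$-submodule $W\subseteq F^{\alpha,\beta}(V)$ equals the whole module, in three steps: a grading reduction, an $\s$-stability claim for each weight-fiber, and a connectedness step.

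\textbf{Grading.} Since $F^{\alpha,\beta}(V)$ is a weight module for the Cartan $\bigoplus_{i=1}^N\C d_i$ with distinct weights on distinct spaces $V\otimes t^s$, any submodule decomposes as $W=\bigoplus_{s\in\Z^N}W_s$, with $W_s$ identified with a subspace of $V$, and the action becomes $A_{r,s}(W_s)\subseteq W_{s+r}$ where $A_{r,s}=(\bar r,s+\alpha)I+r\bar r^t\in\s\oplus\C I$. Here $r\bar r^t\in\s$ because $\bar r=Jr$ for the standard symplectic $J$, and the matrices $\{r\bar r^t:r\in\Z^N\}$ span $\s$ by a dimension count. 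Remark~\ref{Rmk1} lets me shift $\alpha$ modulo $\Z^N$ whenever convenient.

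\textbf{Core step: $W_s$ is $\s$-stable.} The length-two composition $A_{-r,s+r}A_{r,s}$ collapses to the scalar $-(\bar r,s+\alpha)^2I$ via $(r\bar r^t)^2=(\bar r,r)\,r\bar r^t=0$, so it is useless. I would therefore analyze weight-preserving length-three compositions $h_{r_1}h_{r_2}h_{r_3}$ with $r_1+r_2+r_3=0$, which act on $V\otimes t^s$ by $(a_1I+M_1)((a_2+\mu_{23})I+M_2)(a_3I+M_3)$, where $M_i=r_i\bar{r_i}^t\in\s$, $a_i=(\bar{r_i},s+\alpha)$, $\mu_{ij}=(\bar{r_i},r_j)$. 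Expanding by means of $M_i^2=0$, $M_iM_j=\mu_{ij}\,r_i\bar{r_j}^t$, and the polarization identity $r_i\bar{r_j}^t+r_j\bar{r_i}^t\in\s$, the product decomposes into a scalar piece, an $\s$-linear piece, and higher-order pieces in $U(\s)$. Varying $(r_1,r_2,r_3)$, taking linear combinations, and using the bracket relations $[h_r,h_{r'}]=(\bar r,r')h_{r+r'}$ to eliminate non-$\s$-linear debris, I extract operators in $\s$ that preserve $W_s$; since $\{r\bar r^t\}$ spans $\s$, the whole of $\s$ is realized, so $W_s$ is $\s$-stable.

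\textbf{Conclusion and obstacle.} Irreducibility of $V$ gives $W_s\in\{0,V\}$; set $S=\{s:W_s=V\}$, which is non-empty. For $s\in S$ and $r\neq 0$, the vector $(\bar r,s+\alpha)v+r\bar r^tv\in W_{s+r}$ is nonzero for some $v\in V$: if $(\bar r,s+\alpha)=0$ it reduces to $r\bar r^tv\neq 0$ (since $r\bar r^t\in\s\setminus\{0\}$ acts nontrivially on the non-trivial $V$), and if $(\bar r,s+\alpha)\neq 0$ then vanishing for all $v$ would make $r\bar r^t$ act as the non-zero scalar $-(\bar r,s+\alpha)$, contradicting the nilpotency $(r\bar r^t)^2=0$. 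Hence $s+r\in S$, so $S=\Z^N$ and $W=F^{\alpha,\beta}(V)$. The main obstacle is the core step: the nilpotency $(r\bar r^t)^2=0$ precludes the Witt-algebra's length-two trick and forces the use of length-three compositions whose expansion entangles $\s$-linear terms with quadratic and cubic $U(\s)$ contributions, requiring a careful combinatorial isolation. For the excluded fundamental modules $V(\delta_k)$, $0\le k\le n$, this isolation degenerates (the extracted operators stabilize a proper $\s$-submodule of $F^{\alpha,\beta}(V)$ arising from the particular structure of the fundamental representation), explaining why the theorem excludes these finitely many cases.
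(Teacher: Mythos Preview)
Your core step is where the argument breaks, and you essentially say so yourself. The assertion that each fiber $W_s$ is $\s$-stable is \emph{false} for the excluded modules: when $V=V(\delta_1)=\C^N$, the submodule with $W_s=\C(s+\alpha)$ is $\Ha$-stable (indeed $A_{r,s}(s+\alpha)=(\bar r,s+\alpha)(s+r+\alpha)$) but not $\s$-stable, since $r\bar r^t(s+\alpha)=(\bar r,s+\alpha)\,r\notin\C(s+\alpha)$ for generic $r$. Hence any proof that $W_s$ is $\s$-stable must invoke the hypothesis that $V$ is not fundamental, and your length-three sketch never does: ``varying $(r_1,r_2,r_3)$, taking linear combinations, and using bracket relations to eliminate non-$\s$-linear debris'' is a hope, not a mechanism. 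You give no indication how the cubic and quadratic $U(\s)$ terms are separated from the linear ones, nor where the hypothesis on $V$ enters that separation.

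The paper avoids this by \emph{not} attempting to prove $\s$-stability of a single $W_s$. It sets $M=\bigcap_r P_r$ and proceeds in two stages. First, from $v\in P_k$ one applies the length-two family $h_{r-s}h_s$ (sending the $k$-fiber to the $(k{+}r)$-fiber, not back to itself), obtaining a polynomial $g_2(s)v\in P_{k+r}$; a Vandermonde argument then places every degree-four coefficient of $g_2$ (each a product of two root vectors, e.g.\ $\tfrac14 X_{2\epsilon_i}^2$, $X_{\epsilon_i+\epsilon_j}^2+\tfrac12 X_{2\epsilon_i}X_{2\epsilon_j}$, $X_{\epsilon_i-\epsilon_j}^2-\tfrac12 X_{-2\epsilon_j}X_{2\epsilon_i}$) applied to $v$ into $P_{k+r}$ for all $r$. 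If $V$ is infinite-dimensional, some such operator is nonzero on $v$ (else every root vector would act locally nilpotently and $V$ would be finite-dimensional), so $M\neq 0$; if $V$ is finite-dimensional but not fundamental, a height argument on weight vectors of $V(\lambda)$ yields the same conclusion. This is exactly where the hypothesis is consumed. Second, once $v\in M$, one applies the length-\emph{one} family $h_s$ to $v\otimes t^{r-s}$ (landing in the $r$-fiber for every $s$); Vandermonde on the resulting $g_1(s)$ extracts each root vector individually, so $M$ is $\s$-stable and hence $M=V$, giving $P=F^{\alpha,\beta}(V)$. Your computation $A_{-r,s+r}A_{r,s}=-(\bar r,s+\alpha)^2 I$ is correct but beside the point: the paper never uses weight-preserving compositions, so the nilpotency $(r\bar r^t)^2=0$ is not the obstruction you make it out to be.
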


The proof of Theorem~\ref{thm1} will be given in the next section.

\subsection{Jet modules}
Let us recall the notion of Jet modules over Hamiltonian vector fields on a torus from \cite{T18}. An $\Ha$-module $J$ is called a jet module if it  satisfies the following properties:
\begin{enumerate}
    \item The action of $d_i$ on $J$ is diagonalizable for $1\leq i\leq N$;
    \item $J$ is a free $A_N$-module of finite rank;
    \item The actions  $\Ha$ and $A_N$ on $J$ are naturally compatible.
\end{enumerate}
Indecomposible and irreducible jet modules were classified in \cite[Theorem 4.4,Theorem 5.2]{T18} and all irreducible jet modules are of the form $F^{\alpha,\beta}(V)$ for some irreducible module $V$ over the symplectic Lie algebra and some $\alpha,\beta \in \C^N$. Theorem~\ref{thm1} shows  that there are only finitely many cases where irreducible jet modules over $\Ha$ are  not  irreducible as $\Ha$-modules. Moreover,  we can construct a large class of irreducible generalized jet modules (with no requirement of finite rank as an  $A_N$-module) over the Hamiltonian Lie algebra if we define the action of $A_N$ by 
\[t^r (v\otimes t^s):=v\otimes t^{r+s}.\]

\section{Proof of Theorem \ref{thm1}.}

We start with the following standard result (cf. \cite[Lemma 2.4]{LZ15}).

\begin{lemma}\label{fd}
Let $\g$ be a finite-dimensional semi-simple Lie algebra with a fixed Cartan subalgebra, $\Phi$ the root system of $\g$ and  V an irreducible $\g$-module. Then we have 
\begin{enumerate}
    \item Every root vector of $\g$ acts either injectively or locally nilpotently on $V$.
    \item The module $V$ is finite dimensional if and only if all root vectors act locally nilpotently.
\end{enumerate}
\end{lemma}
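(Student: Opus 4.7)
The plan is to handle the two parts separately, with the first being a submodule argument and the second relying on $\mathfrak{sl}_2$-theory together with highest-weight considerations.

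For part~(1), I would consider the generalized $x_\alpha$-torsion
\[
V^{(\alpha)} := \bigcup_{n\geq 1}\ker(x_\alpha^n) \subseteq V.
\]
The crucial point is that because $x_\alpha$ is a root vector of the finite-dimensional semisimple Lie algebra $\g$, $\mathrm{ad}\,x_\alpha$ is nilpotent on $\g$; fix $m$ with $(\mathrm{ad}\,x_\alpha)^m \equiv 0$. The Leibniz-type identity
\[
x_\alpha^n(yv) \;=\; \sum_{k=0}^{n}\binom{n}{k}\bigl((\mathrm{ad}\,x_\alpha)^k y\bigr)\cdot x_\alpha^{n-k}v
\]
shows that for $n$ large enough relative to $m$ and the $x_\alpha$-nilpotence index of $v$, every summand vanishes: either $k\geq m$ and the bracket factor is zero, or $n-k$ is large enough to kill $v$. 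Hence $V^{(\alpha)}$ is a $\g$-submodule, and irreducibility forces $V^{(\alpha)}\in\{0,V\}$, giving the dichotomy: $x_\alpha$ is either injective or locally nilpotent.

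For part~(2), the ``only if'' direction is immediate, since every root vector is ad-nilpotent and therefore acts nilpotently on any finite-dimensional representation of a semisimple Lie algebra. For the converse, assume every root vector acts locally nilpotently. For each simple root $\alpha_i$ with $\mathfrak{sl}_2$-triple $(e_i,f_i,h_i)$, the local nilpotence of $e_i$ and $f_i$ implies that every vector of $V$ lies in a finite-dimensional submodule for $\langle e_i,f_i,h_i\rangle$, so $h_i$ acts semisimply with integer eigenvalues. This yields an integral weight decomposition $V=\bigoplus_\lambda V_\lambda$. Fixing a Borel $\mathfrak{b}=\h\oplus\mathfrak{n}^+$, I would then locate a highest weight vector by starting from any weight vector and iteratively applying positive root vectors: the chain of weights traced out lies in $\lambda+Q_+$, and the integrality of weights combined with the local nilpotence of each $e_\alpha$ forces the process to terminate in a nonzero vector $v^+$ annihilated by all positive root vectors. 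Irreducibility then identifies $V$ with the irreducible highest weight module $L(\lambda^+)$; since $f_i$ is locally nilpotent on $v^+$, the highest weight $\lambda^+$ is dominant integral, and the classical Cartan--Weyl theorem forces $\dim V<\infty$.

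The main obstacle is the termination step in the converse direction of~(2): weight spaces are not known a priori to be finite dimensional, so one must combine integrality of weights with an Engel-type argument for the nilpotent subalgebra $\mathfrak{n}^+$ acting by locally nilpotent operators in order to guarantee a nonzero common null vector for all positive root vectors, rather than merely an infinite ascending chain.
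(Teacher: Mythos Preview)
Your argument for part~(1) is essentially the paper's: both exploit that $\mathrm{ad}\,X_\alpha$ is nilpotent on $U(\g)$ and use the Leibniz identity. The paper phrases it as ``if $X_\alpha v=0$ then $X_\alpha^{k}(uv)=((\mathrm{ad}\,X_\alpha)^{k}u)\,v=0$ for $k$ large, so $X_\alpha$ is locally nilpotent on $U(\g)v=V$'', which is your submodule argument viewed from the side of a single kernel vector.

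For part~(2) the paper takes a much shorter route: it simply invokes \cite[Theorem~1.1]{MZ13} to conclude that $V$ is simultaneously a highest and a lowest weight module, hence finite-dimensional. Your direct approach is viable in principle, and the obstacle you flag is genuine---it is precisely the content of that Mazorchuk--Zhao theorem. The Engel-type statement you would need (that $U(\mathfrak n^{+})v$ is finite-dimensional once each positive root vector acts locally nilpotently) does hold: order the positive root vectors by height so that each tail spans an ideal of $\mathfrak n^{+}$ and peel them off inductively via PBW; a maximal-weight vector in the resulting finite-dimensional space is then the desired highest weight vector. Note also that your preliminary claim---that local nilpotence of $e_i$ and $f_i$ forces every vector into a finite-dimensional $\langle e_i,f_i,h_i\rangle$-submodule---while true, is itself the nontrivial $\mathfrak{sl}_2$-integrability criterion and deserves a word of justification. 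So your outline can be completed, but as written it stops short of the hard steps; the paper sidesteps them entirely by citation.
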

\begin{proof}
    In \cite[Lemma 2.4]{LZ15}, the details of the proof were given for $\gl_d$. For completeness, we give a general proof. 
    \begin{enumerate}
        \item For some $\alpha \in \Phi$, suppose $X_{\alpha}$ is not injective on $V$, then there exists some $v\in V$ such that $X_{\alpha}.v=0$. We know that $ad\; X_{\alpha}$ is locally nilpotent on the universal enveloping algebra $U(\g)$. For any $u\in U(\g)$, we have $(ad\; X_{\alpha})^k \;u=0$ for sufficiently  large $k$. 
        We see that $X_{\alpha}uv=uX_{\alpha}v+ad\; X_{\alpha}(u).v=ad\; X_{\alpha}(u).v$, and $X_{\alpha}^2u.v=X_{\alpha}\; (ad \; X_{\alpha} u).v=(ad\; X_{\alpha})^2(u).v$ and continuing the process, we have $X_{\alpha}^k uv=(ad\;X_{\alpha})^k \;(u).v=0$. Now the statement follows from the fact that $V$ is irreducible.
        \item  If $V$ is finite dimensional, then clearly $X_\alpha,\; \alpha\in \Phi$ acts locally nilpotently. Now we assume that all root vectors act locally nilpotently. By  \cite[Theorem 1.1]{MZ13}, $V$ is a highest weight module as well as a lowest weight module. Therefore  $V$ is finite-dimensional.    \end{enumerate}
\end{proof}

The following statement is a standard application of the Vandermonde determinant (cf. \cite[Lemma 2.3]{LZ15}.)

\begin{lemma}\label{la}
   For each $a\in \Z_+^N$ consider $c_a\in End(V)$ and assume that only finitely many $c_a$ are nonzero. Define $g(m)=\sum_{a\in \Z_+^N}c_am^a$,  $m\in \Z^N$. Let $M$ be a subspace of $V$.  If $g(m)v\in M$ for some $v\in V$ and all $m\in \Z^N$, then $c_av\in M$ for all $a\in \Z_+^N$. 
\end{lemma}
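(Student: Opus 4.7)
The statement is a routine multivariable consequence of the single-variable Vandermonde determinant, and the plan is an induction on $N$ that reduces to the classical one-variable fact.

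For the base case $N=1$, let $d$ be the largest index with $c_d\neq 0$, and pick any $d+1$ distinct integers $m_0,\dots,m_d$. The matrix $(m_i^j)_{0\le i,j\le d}$ is a Vandermonde matrix, hence invertible over $\C$, so one can solve the linear system $g(m_i)v=\sum_j m_i^j\, c_j v$ to express each $c_j v$ as a $\C$-linear combination of the vectors $g(m_i)v$, all of which lie in $M$ by hypothesis. Since $M$ is a subspace, $c_j v\in M$ for every $j$.

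For the inductive step, fix an arbitrary tuple $(m_2,\dots,m_N)\in\Z^{N-1}$ and regroup:
\[g(m_1,m_2,\dots,m_N)=\sum_{a_1\ge 0}m_1^{a_1}\,h_{a_1}(m_2,\dots,m_N),\]
where $h_{a_1}(m_2,\dots,m_N)=\sum_{a_2,\dots,a_N}c_{(a_1,a_2,\dots,a_N)}\,m_2^{a_2}\cdots m_N^{a_N}$. Only finitely many of the $h_{a_1}$ are nonzero, so the $N=1$ case, applied as $m_1$ varies over $\Z$, yields $h_{a_1}(m_2,\dots,m_N)v\in M$ for every $a_1\in\Z_+$ and every $(m_2,\dots,m_N)\in\Z^{N-1}$. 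Applying the inductive hypothesis separately to each $h_{a_1}$ (which has finitely many nonzero polynomial coefficients in $N-1$ variables) then gives $c_{(a_1,a_2,\dots,a_N)}v\in M$ for every multi-index.

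There is no genuine obstacle; the only care required is to check in the inductive step that for each fixed $a_1$ the family $h_{a_1}$ inherits the finiteness and polynomial-coefficient hypotheses needed to reinvoke the lemma, which is immediate from the definition of $h_{a_1}$.
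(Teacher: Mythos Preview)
Your proof is correct and is precisely the standard Vandermonde argument the paper alludes to; the paper itself does not spell out a proof but merely cites \cite{LZ15} and calls the lemma ``a standard application of the Vandermonde determinant.'' Your induction on $N$ reducing to the one-variable Vandermonde identity is the expected way to make this precise.
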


 For fixed $k,r\in \Z^N, \alpha\in \C^N$ and a tuple of indeterminantes $s=(s_1, \ldots, s_{2n})^t$, let $g_1(s),\; g_2(s)\in End(V)[s]$ be as follows:
 \[g_1(s)= \bm{(}\bar{s},r+\alpha\bm{)}I+s\bar{s}^t=\bm{(}\bar{s},r+\alpha\bm{)}I+\sum_{a=1}^n(s_as_{n+a}h_a+\frac{s_{n+a}^2}{2}X_{-2\epsilon_a}-\frac{s_{a}^2}{2}X_{2\epsilon_a})\]
 \[+\sum_{\substack{b,c=1\\b\neq c}}^ns_bs_{n+c}X_{\epsilon_b-\epsilon_c}+\sum_{\substack{d,e=1\\ d<e}}^n(s_{n+d}s_{n+e}X_{-\epsilon_d-\epsilon_e}-s_ds_eX_{\epsilon_d+\epsilon_e}),\]
 
\[g_2(s)=[\bm{(}\bar{r}-\bar{s},k+s+\alpha\bm{)}I+(r-s)(\bar{r}-\bar{s})^t][\bm{(}\bar{s},k+\alpha\bm{)}I+s\bar{s}^t]\]
\[=[\bm{(}\bar{r}-\bar{s},k+s+\alpha\bm{)}I+\sum_{a_1=1}^n((r_{a_1}-s_{a_1})(r_{n+a_1}-s_{n+a_1})h_{a_1}+\frac{(r_{n+a_1}-s_{n+a_1})^2}{2}X_{-2\epsilon_{a_1}}\]
\[-\frac{(r_{a_1}-s_{a_1})^2}{2}X_{2\epsilon_{a_1}})
+\sum_{\substack{b_1,c_1=1\\ b_1\neq c_1}}^n(r_{b_1}-s_{b_1})(r_{n+c_1}-s_{n+c_1})X_{\epsilon_{b_1}-\epsilon_{c_1}}+ \]
\[
 \sum_{\substack{f_1,m_1=1\\ f_1<m_1}}^n((r_{n+f_1}-s_{n+f_1})(r_{n+m_1}-s_{n+m_1})X_{-\epsilon_{f_1}-\epsilon_{m_1}}-(r_{f_1}-s_{f_1})(r_{m_1}-s_{m_1})X_{\epsilon_{f_1}+\epsilon_{m_1}})]\times\]
 \[[\bm{(}\bar{s},k+\alpha\bm{)}I+\sum_{a_2=1}^n(s_{a_2}s_{n+a_2}h_{a_2}+\frac{s_{n+a_2}^2}{2}X_{-2\epsilon_{a_2}}-\frac{s_{a_2}^2}{2}X_{2\epsilon_{a_2}})
 +\sum_{\substack{b_2,c_2=1\\
 b_2\neq c_2}}^ns_{b_2}s_{n+c_2}X_{\epsilon_{b_2}-\epsilon_{c_2}}\]
 \[+\sum_{\substack{f_2,m_2=1\\ f_2<m_2}}^n(s_{n+f_2}s_{n+m_2}X_{-\epsilon_{f_2}-\epsilon_{m_2}}-s_{f_2}s_{m_2}X_{\epsilon_{f_2}+\epsilon_{m_2}})]\in End(V)[s].\]

    We see that $g_1, \; g_2$ are polynomials in $2n$ variables of degrees $2$ and $4$ respectively. Let us compute the coefficients of some  monomials of $g_2(s)$ of degree $4$  needed  for our proof:

    \newpage
\begin{table}[htbp]
    \centering
    \begin{tabular}{|c|c|c|}
        \hline
        \textbf{Monomials} & \textbf{Coefficients} & \textbf{}  \\
        \hline
         $s_i^4$ &   $\frac{1}{4}X_{2\epsilon_i}X_{2\epsilon_i}$ &  $1\leq i\leq n$\\
         
        $s_i^2s_j^2$ & $X_{\epsilon_i+\epsilon_j}^2+\frac{1}{2}X_{2\epsilon_i}X_{2\epsilon_j}$ &  $1\leq i<j\leq n$\\
        
    $s_{n+i}^4$ &  $\frac{1}{4}X_{-2\epsilon_i}X_{-2\epsilon_i}$ & $1\leq i\leq n$ \\

      $s_{n+i}^2s_{n+j}^2$ & $X_{-\epsilon_i-\epsilon_j}^2+\frac{1}{2}X_{-2\epsilon_j}X_{-2\epsilon_i}$ & $1\leq i<j\leq n$ \\

      $s_i^2s_{n+j}^2$ & $X_{\epsilon_i-\epsilon_j}^2-\frac{1}{2}X_{-2\epsilon_j}X_{2\epsilon_i}$ & $1\leq i\neq j\leq n$ \\

      $s_i^3s_{n+j}$ & $-X_{\epsilon_i-\epsilon_j}X_{2\epsilon_i}$ &  $1\leq i\neq j\leq n$ \\    
      \hline
    \end{tabular}
\end{table}
 
    Let $\{e_i,e_{n+i}:1\leq i\leq n\}$ be the standard basis of $\Z^N$. We note the following actions: 
  
 \begin{enumerate}
     \item $\H_{e_i}(v\otimes t^k)=-(k_{n+i}+\alpha_{n+i})v\otimes t^{k+e_i}-\frac{1}{2}X_{2\epsilon_i}v\otimes t^{k+e_i}$.
     \item $\H_{e_{n+i}}(v\otimes t^k)=(k_{i}+\alpha_{i})v\otimes t^{k+e_{n+i}}+\frac{1}{2}X_{-2\epsilon_i}v\otimes t^{k+e_{n+i}}$.  
     \item $\H_{e_i+e_{n+j}}(v\otimes t^{k})=(k_j+\alpha_j-k_{n+i}-\alpha_{n+i})v\otimes t^{k+e_i+e_{n+j}}+X_{\epsilon_i-\epsilon_j}v\otimes t^{k+e_i+e_{n+j}}+\frac{1}{2}X_{-2\epsilon_j}v\otimes t^{k+e_i+e_{n+j}}-\frac{1}{2}X_{2\epsilon_i}v\otimes t^{k+e_i+e_{n+j}}$, when $i\neq j$.
 \end{enumerate}

 Let $P$ be a nonzero submodule of $F^{\alpha,\beta}(V)$. Since $F^{\alpha,\beta}(V)$ is a weight module over $\Ha$, so is $P$. Therefore we have $P=\oplus_{r\in \Z^N}P_r\otimes t^r$. Since $P$ is a nonzero submodule, we assume $v(\neq 0)\in P_k$ for some $k\in \Z^N$. Now we have the following claim.

 \medskip

 {\bf Claim (1):} $M:=\cap_{r\in \Z^N}P_r\neq 0$.\\
 
 {\bf Proof.} Let $r\in \Z^N$. We have that $\H_{r-s}\H_s(v\otimes t^k)\in P$ for all $s\in \Z^N$.
Assume that $\H_{r-s}\H_s(v\otimes t^k)=w_1(s)\otimes t^{k+r}$, where $w_1(s)=[\bm{(}\bar{r}-\bar{s},k+s+\alpha\bm{)}I+(r-s)(\bar{r}-\bar{s})^t][\bm{(}\bar{s},k+\alpha\bm{)}I+s\bar{s}^t]v$. Then $w_1(s)\in P_{k+r}$ for all $r,s\in \Z^N$. This means that $g_2(s)v\in P_{k+r}$ for all $s\in \Z^N$. \\

{\bf Case I.} Suppose now that $V$ is an infinite dimensional irreducible $\s$-module.  We note that if $v\in P_k$  for some $k\in \Z^N$ and $T$ is any of the  coefficients of degree four monomials of $g_2(s)$, then $T(v)\in P_{k+r}$ for all $r\in \Z^N$ by Lemma \ref{la}. Now if $T(v)\neq 0$ for some such $T$, then we will have $M\neq 0$. Assume that $T(v)=0$ for all coefficients of degree four monomials of $g_2(s)$. Looking at the coefficients of $s_i^4$ and $s_{n+i}^4$ we see that $X_{\pm 2\epsilon_i}$ act locally nilpotently on $V$ by Lemma \ref{fd}. Now looking at the coefficients of $s_i^2s_j^2$ for $1\leq i<j\leq n$, we have $X_{\epsilon_i+\epsilon_j}^2v=-\frac{1}{2}X_{2\epsilon_i}X_{2\epsilon_j}v$, which implies $X_{\epsilon_i+\epsilon_j}^4 v=0$ (here we used the fact that $X_{2\epsilon_i}^2 v=0$). Hence again $X_{\epsilon_i+\epsilon_j}$ acts locally nilpotently on $V$ by Lemma \ref{fd}. Similarly looking at coefficients of $s_{n+i}^2s_{n+j}^2$, and applying Lemma \ref{fd}, we prove that $X_{-\epsilon_i-\epsilon_j}$ acts locally nilpotently. We also have  $X_{\epsilon_i-\epsilon_j}^2 v=0$ (respectively $X_{\epsilon_i-\epsilon_j}X_{2\epsilon_i} v=0$) if $X_{2\epsilon_i}v=0$ (respectively $X_{2\epsilon_i}v\neq 0$) by looking at the coefficients of $s_i^2s_{n+j}^2$ and $s_i^3s_{n+j}$ respectively. In both the cases we  have a locally nilpotent action of $X_{\epsilon_i-\epsilon_j}$ on $V$. This implies that all root vectors of $\s$ act locally nilpotently on $V$, and hence  $V$ is finite dimensional by Lemma \ref{fd}, which is a contradiction. So, at least one of those operators acts non-zero on $v$, and hence $M\neq 0$. \\

{\bf Case II.} Suppose $V$ is a finite-dimensional irreducible module which is not isomorphic to $V(\delta_l)$ for $l=0,1,\dots n$. Then  $V$ is isomorphic to $V(\lambda)$ for some dominant integral weight $\lambda$ such that $\lambda=\sum_{j=1}^qa_{i_j}\delta_{i_j}$ with $a_{i_j}\in \N$ and $1\leq i_1<i_2\dots <i_q\leq n$ for some $q\geq 2$. Assume $v_\lambda$ is the highest weight vector of $V$. Let $\mathcal{M}:=\{v\in V(\lambda):v\in P_l\; \text{for some}\; l\in \Z^N\}$ and define a height function $Ht$ on $\mathcal{M}$ by $Ht(v)=\text{min}\{H(\lambda-\lambda_i)|  v=\sum_{\lambda_i \in \h^*} v_{\lambda_i},\; v_{\lambda_i}\neq 0\;  \text{for atleast one}\;  \lambda_i\in \h^*\}$. For any $w\in V(\lambda)$, when we write $w=\sum_{i=1}^pv_i$,  we assume that the expression is written in increasing order with respect to height i.e $Ht(v_1)<Ht(v_2)\dots <Ht(v_p)$. Let $w$ be a minimal element of $\mathcal{M}$ with the fact $w\otimes t^k\in P$, then we will show that $Ht(w)=Ht(v_1)=0$.

If $Ht(w)>0$ and $w=\sum_{i=1}^p v_{i}$, then by the assumption $v_{1}$ is not a highest weight vector. Hence there exists a simple root $\alpha$ for which $X_\alpha v_1\neq 0$.

If $\alpha=\alpha_n$, then we see that $$\H_{e_n}(w\otimes t^k)=-(k_{2n}+\alpha_{2n})w\otimes t^{k+e_n}-\frac{1}{2}X_{2\epsilon_n}w\otimes t^{k+e_n}\in P$$ and hence $-(k_{2n}+\alpha_{2n})v_1-\frac{1}{2}X_{2\epsilon_n}v_1\in \mathcal{M}$ with height strictly less than the height of $v_1$. This contradicts to the minimality of the height of $w$. 

Now, if $X_{2\epsilon_i}v_1\neq 0$ for some $1\leq i\leq n-1$, then $$\H_{e_i}(w\otimes t^k)=-(k_{n+i}+\alpha_{n+i})w\otimes t^{k+e_i}-\frac{1}{2}X_{2\epsilon_i}w\otimes t^{k+e_i} \in P.$$ We get that $(k_{n+i}+\alpha_{n+i})w+X_{2\epsilon_i}w\in \mathcal{M}$, which has a smaller height than $v_1$ giving a contradiction. 

Now we assume that $X_{2\epsilon_i} v_1=0$  for $1\leq i\leq n$. For any $v_j$ with $j\in \{2,\dots p\}$, if we have $Ht(v_j)<Ht(v_1)+2(n-i)+1$, then we must have $X_{2\epsilon_i}v_j=0$, since otherwise again $(k_{n+i}+\alpha_{n+i})w+X_{2\epsilon_i}w\in \mathcal{M}$ will have smaller height than $v_1$.

 Let  $X_{\epsilon_j-\epsilon_{j+1}}v_j\neq 0$ for some $1\leq j\leq n-1$. We have $$\H_{e_j+e_{n+j+1}}(v_1\otimes t^k)=$$
 $$\{(k_{j+1}+\alpha_{j+1}-k_j-\alpha_j)I+X_{\epsilon_j-\epsilon_{j+1}}+\frac{1}{2}X_{-2\epsilon_{j+1}}-\frac{1}{2}X_{2\epsilon_{j}}\}v_1\otimes t^{k+e_j+e_{n+j+1}}$$ belongs to $P$, which means that $$\{(k_{j+1}+\alpha_{j+1}-k_j-\alpha_j)I+X_{\epsilon_j-\epsilon_{j+1}}+\frac{1}{2}X_{-2\epsilon_{j+1}}-\frac{1}{2}X_{2\epsilon_{j}}\}v_1$$ is an element of $\mathcal{M}$ with smaller height than $v_1$.
 This gives again a contradiction (note that by the previous assumption, no $X_{2\epsilon_j}v_k$ can reach  $X_{\epsilon_j-\epsilon_{j+1}}v_1$ for $k\in \{2,\dots p\}$). So, the height of $w$ must be zero. Now, without loss of generality, we may assume that $w=v_\lambda+\sum_{j=2}^pv_j\in \mathcal{M}$.

{\bf Subcase 1:} If $i_q=n$, then consider the $\mathfrak{sl}_2$ copy given by $\C\{h_{i_1}, X_{\pm 2\epsilon_{i_1}}\}$. We see that $\lambda(h_{i_1})=\sum_{j=1}^qa_{i_j}\geq 2$. Hence, $X_{-2\epsilon_{i_1}}^2.v_{\lambda}\neq 0$. This  gives us $X_{-2\epsilon_{i_1}}^2w\neq 0$. 
Since $w\in \mathcal{M}$, without loss of generality, we assume that $w\otimes t^k\in P$ for some $k\in \Z$, then we have $\H_{r-s}\H_s.v\otimes t^k\in P$ for all $r,s\in \Z^N$.
This will imply that $g_2(s).v\in P_r$ for all $r,s \in \Z^N$, now looking at the coefficients of $s_{n+i_1}^4$, we see that $0\neq X^2_{-2\epsilon_{i_1}}.w\in P_{r}$ for all $r\in \Z^N$.
Hence we have the claim (I) in this case. 

    {\bf Subcase 2:} If $i_q<n$, then consider the $\mathfrak{sl}_2$ copy given by $\C\{h_{i_1}-h_{i_q}, X_{\pm (\epsilon_{i_1}-\epsilon_{i_q})}\}$. We have $\lambda(h_{i_1}-h_{i_q})=\sum_{j=1}^qa_{i_j}\geq 2$ and hence $X_{\epsilon_{i_q}-\epsilon_{i_1}}^2v_{\lambda}\neq 0$.  Notice that $s_{i_q}^2s_{n+i_1}^2$ has the coefficient $$X_{\epsilon_{i_q}-\epsilon_{i_1}}^2-\frac{1}{4}X_{-2\epsilon_{i_1}}X_{2\epsilon_{i_q}}-\frac{1}{4}X_{2\epsilon_{i_q}}X_{-2\epsilon_{i_1}}=X_{\epsilon_{i_q}-\epsilon_{i_1}}^2-\frac{1}{2}X_{-2\epsilon_{i_1}}X_{2\epsilon_{i_q}}.$$ By height argument, we see that $\{X_{\epsilon_{i_q}-\epsilon_{i_1}}^2-\frac{1}{2}X_{-2\epsilon_{i_q}}X_{2\epsilon_{i_1}}\}w\neq 0$. Now applying Lemma \ref{la}, with the coefficients of $s_{i_q}^2s_{n+i_1}^2$ in $g_2(s)$, we see that $M\neq 0$.

    Hence, in all cases we  have $M\neq 0$. \\

{\bf Claim (2):} $M$ is an $\s$ module.\\

{\bf Proof.} Let $v\in M$. Then $v\otimes t^{r-s}\in P$ for all $r,s\in \Z^N$, and we have $$\H_s(v\otimes t^{r-s})=w_2(s)\otimes t^{r},$$ where $w_2(s)=\{\bm{(}\bar{s}, r+\alpha\bm{)}I+s\bar{s}^t\}v$. This means that $g_1(s)v\in P_{r}$ for all $s\in \Z^N$. Now looking at the coefficients of degree 2 in $g_1(s)$ and  applying Lemma~\ref{la}, we see that $Xv\in P_r$ for all $r\in \Z^N$ and all $X\in \s$. Therefore, $Xv\in M$ for all $X\in \s$, and hence $M$ is an $\s$-module.

Claim (2) implies that $P=F^{\alpha,\beta}(V)$ and hence $F^{\alpha,\beta}(V)$ is irreducible, completing the proof of the theorem.

\section{Extensions of fundamental modules.}
\subsection{Irreducibility of $F^{\alpha,\beta}(V(\delta_0))$.}
We first consider the case $k=0$. We have

\begin{proposition}\label{prop1}
   The $\Ha$-module $F^{\alpha,\beta}(V(\delta_0))=\C\otimes A_N$ is irreducible if and only if  $\alpha\notin \Z^N$.

    
\end{proposition}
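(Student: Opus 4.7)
The statement splits naturally into two directions, and the plan is to dispatch each by an essentially elementary argument.

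For the ``only if'' direction, suppose $\alpha\in\Z^N$. By Remark~\ref{Rmk1} we may replace $\alpha$ by $0$, absorbing the shift into $\beta$. Then $h_r(1\otimes t^0)=(\bar r,0)(1\otimes t^r)=0$ for every $r\in\Z^N$, while the degree derivations act diagonally on $1\otimes t^0$, so $\C(1\otimes t^0)$ is a one-dimensional $\Ha$-submodule. Undoing the shift, $\C(1\otimes t^{-\alpha})$ is a proper submodule in the original parametrization, and $F^{\alpha,\beta}(V(\delta_0))$ is reducible.

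For the ``if'' direction, assume $\alpha\notin\Z^N$ and let $P$ be a nonzero submodule of $F^{\alpha,\beta}(V(\delta_0))=\C\otimes A_N$. Because the basis vectors $1\otimes t^k$ have pairwise distinct $d_i$-weights, $P=\operatorname{span}\{1\otimes t^k:k\in S\}$ for some nonempty $S\subseteq\Z^N$. For $k\in S$ and $m\in\Z^N$, the two-step action
\[
h_{m-r}h_r(1\otimes t^k)=(\bar r,k+\alpha)(\overline{m-r},k+r+\alpha)(1\otimes t^{k+m})
\]
shows that $k+m\in S$ as soon as some $r\in\Z^N$ makes both scalars nonzero. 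Using the skew-symmetry $(\bar r,s)=-(\bar s,r)$ (and in particular $(\bar r,r)=0$) recorded in the paper, the two conditions rewrite as
\[
(\overline{k+\alpha},r)\neq 0,\qquad (\overline{k+m+\alpha},r)\neq -(\bar m,k+\alpha).
\]
The hypothesis $\alpha\notin\Z^N$ enters precisely to guarantee that neither coefficient vector $\overline{k+\alpha}$ nor $\overline{k+m+\alpha}$ is zero, so each condition is a nontrivial affine constraint on $r\in\Z^N$.

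The main, and indeed only, remaining obstacle is the small Diophantine step: showing that two such affine conditions cannot jointly be violated on all of $\Z^N$. I plan to handle this by a case split. If some coordinate $j$ satisfies both $(\overline{k+\alpha})_j\neq 0$ and $(\overline{k+m+\alpha})_j\neq 0$, probe $r=ne_j$ with $n\in\Z$ and avoid the at most two exceptional values of $n$. Otherwise, the supports of the two coefficient vectors are disjoint; pick $j$ in the support of the first and $\ell$ in the support of the second, and set $r=ne_j+n'e_\ell$, so that the two conditions decouple into independent single-variable inequalities that are trivially satisfiable. This yields $k+m\in S$ for every $m\in\Z^N$, hence $S=\Z^N$ and $P=F^{\alpha,\beta}(V(\delta_0))$, completing the proof.
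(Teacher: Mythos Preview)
Your proof is correct. Both directions match the paper's argument in spirit: the reducibility for $\alpha\in\Z^N$ is handled identically via the one-dimensional submodule $\C(1\otimes t^{-\alpha})$, and for $\alpha\notin\Z^N$ both arguments pass to the two-step action $h_{m-r}h_r(1\otimes t^k)$, which on the trivial module reduces to the scalar product $(\bar r,k+\alpha)(\overline{m-r},k+r+\alpha)$. The only real difference is in how the nonvanishing of this product for some $r$ is established. The paper fixes the target $r$ first, treats the scalar as a quadratic polynomial in the auxiliary variable $s$, and invokes the Vandermonde-type Lemma~\ref{la} to peel off a nonzero monomial coefficient; you instead rewrite the two factors as affine forms in $r$ with nonzero coefficient vectors $\overline{k+\alpha}$ and $\overline{k+m+\alpha}$ and locate a good $r$ directly by the support case-split. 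Your route is a bit more self-contained (no appeal to Lemma~\ref{la}), while the paper's route reuses machinery already in place for the general case of Theorem~\ref{thm1}; neither has any essential advantage over the other here.
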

\begin{proof}
Suppose that $\alpha\in \Z^N$. Then $F^{\alpha,\beta}(V(\delta_0))$ is reducible  since $\C(1\otimes t^{-\alpha})$ is a nonzero proper submodule of $F^\alpha(V(\delta_0))$. It is easy to see that the quotient module is irreducible.
 Let $\alpha\in \C^N\setminus \Z^N$. We will show that $F^{\alpha,\beta}(V(\delta_0))$ is irreducible. 
    Let $P$ be nonzero submodule of $F^{\alpha,\beta}(V(\delta_0))$. 
    Now it being a weight module, there exists $k\in \Z^N$ such that $1\otimes t^k\in P$. Suppose $r\in \Z^N$ such that $\bm{(}\bar{r}, k+\alpha\bm{)}\neq 0$, then $\H_r(1\otimes t^k)=\bm{(}\bar{r}, k+\alpha\bm{)}(1\otimes t^{k+r})\in P$ i.e. $1\otimes t^{k+r}\in P$. Now we choose a fixed $r\in \Z^N$ such that $\bm{(}\bar{r}, k+\alpha\bm{)}=0$. 
    
    Since $\alpha\notin \Z^N $, we will have $(k+\alpha)\neq 0$, so there exists $i\in \{1,\dots, 2n \}$ such that $(k_i+\alpha_i)\neq 0$. Now $\H_{r-s}\H_s(1\otimes t^k)\in P$ for all $r,s\in \Z^N$. This means $g_2(s)\cdot 1\in P_{k+r}$ for all $r\in \Z^N$. Now we see that
     \[ 
      g_2(s)\cdot 1=\bm{(}\bar{r}-\bar{s},k+s+\alpha\bm{)}\bm{(}\bar{s},k+\alpha\bm{)}1=(\bm{(}\bar{r},s\bm{)}-\bm{(}\bar{s},k+\alpha\bm{)})\bm{(}\bar{s},k+\alpha\bm{)}1\]
      \[=-\bm{(}\bar{s},r+k+\alpha\bm{)}\bm{(}\bar{s},k+\alpha\bm{)}1,\]
since we chose $r\in \Z^N$ such that $\bm{(}\bar{r},k+\alpha\bm{)}=0$. Now, as $\alpha\notin \Z^N$, there exist $1\leq i,j\leq N$ such that $(r_i+k_i+\alpha_i)(k_j+\alpha_j)\neq 0$. So we will have a second degree monomial of $g_2(s)$, whose coefficient is $-(r_i+k_i+\alpha_i)(k_j+\alpha_j)$. Applying Lemma~\ref{la}, we can say that $-(r_i+k_i+\alpha_i)(k_j+\alpha_j) 1\in P_{k+r}$. Combining both  cases, we conclude that $1\in P_{k+r}$ for all $r\in \Z^N$. This implies that $P=F^{\alpha,\beta}(V(\delta_0))$ and proves the irreducibility.
\end{proof}

\subsection{Irreducibility of $F^{\alpha,\beta}(V(\delta_k)), \, k>0$.}

Let us first discuss about the concrete realizations of fundamental representations of $\s$. Recall that $V(\delta_0)\simeq \C$ is one-dimensional trivial representation. Also, the space 
$\C^N$ is a natural representation of $\s$ (here, elements of $\C^N$ will be considered as column vectors). We define the contraction map $$\theta_j:\bigwedge^j(\C^N)\rightarrow \bigwedge^{j-2}(\C^N)$$ by 

\begin{equation}
    \theta_j(v_1\wedge \dots v_j)=\sum_{\substack{r,s=1\\ r<s}}^j (-1)^{r+s-1}\bm{(}v_r ,\bar{v_s }\bm{)}(v_1\dots, \hat{v}_r,\dots ,\hat{v}_{s},\dots ,v_j),
\end{equation}
for $2\leq j\leq N$. Here $\hat{v}_i$ means that $v_i$ is ommited. It is well known that $\theta_j$ is an $\s$-module homomorphism for all $2\leq j\leq N$ (cf.  \cite[Proposition 13.25]{C05}). We have the following of $\s$-modules.

\begin{theorem}\cite[Theorem 13.28]{C05}\label{cart}
    \begin{enumerate}
        \item[(i)] $V(\delta_1)\cong \C^N$.
        \item[(ii)] $V(\delta_k)\cong Ker(\theta_k)$,  $2\leq k \leq n$.
    \end{enumerate}
    \end{theorem}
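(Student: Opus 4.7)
The strategy is to exhibit explicit highest weight vectors in each case and then establish irreducibility.

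For (i), take $v := e_1 \in \C^N$. Using the explicit matrix realization of $\s$ from Section~2.1, every simple positive root vector annihilates $e_1$: for $1\leq i\leq n-1$, $X_{\epsilon_i - \epsilon_{i+1}} = e_{i,i+1} - e_{n+i+1, n+i}$ kills $e_1$, and $X_{2\epsilon_n} = 2e_{n, 2n}$ does too. Since $h_j e_1 = \delta_{j,1} e_1$, the weight of $e_1$ is $\epsilon_1$, and $\epsilon_1(\alpha_j^\vee) = \delta_{1,j}$ identifies this weight with $\delta_1$. Irreducibility of $\C^N$ is checked by walking through the basis: successive applications of $X_{-\alpha_1}, \dots, X_{-\alpha_{n-1}}$ to $e_1$ produce $e_2, \dots, e_n$; then $X_{-2\epsilon_n}$ sends $e_n$ to $e_{2n}$; finally the remaining negative root vectors yield $e_{n+1}, \dots, e_{2n-1}$. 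Thus $\C^N$ is a cyclic $\s$-module, hence irreducible, and so $\C^N \cong V(\delta_1)$.

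For (ii), take $u := e_1 \wedge e_2 \wedge \cdots \wedge e_k \in \bigwedge^k \C^N$ for $2\leq k\leq n$. Every $X_{\alpha_i}$ sends $u$ into a wedge product with a repeated index, hence annihilates $u$. Its weight $\epsilon_1+\cdots+\epsilon_k$ satisfies $(\epsilon_1+\cdots+\epsilon_k)(\alpha_j^\vee) = \delta_{j,k}$, identifying the weight as $\delta_k$. To verify $u\in Ker(\theta_k)$, observe that $\bar{e_s} = -e_{n+s}$ for $s\leq n$, so every pairing $(e_r,\bar{e_s})$ with $1\leq r,s\leq k\leq n$ vanishes because $n+s>n\geq k\geq r$; hence $\theta_k(u)=0$. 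Therefore $V(\delta_k)$ embeds as the cyclic $\s$-submodule of $Ker(\theta_k)$ generated by $u$.

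The main obstacle is showing that $Ker(\theta_k)$ is exactly $V(\delta_k)$, i.e.\ irreducible, rather than merely containing it. My plan is to construct the $\s$-equivariant map $\sigma: \bigwedge^{k-2}\C^N \to \bigwedge^k \C^N$ by wedging with the invariant bivector $\omega := \sum_{i=1}^n e_i\wedge e_{n+i}$. A direct calculation shows that on each isotypic component of $\bigwedge^{k-2}\C^N$, the composition $\theta_k\circ\sigma$ acts by a nonzero scalar, which can be read off by evaluating on the corresponding highest weight vector using the structure of the symplectic form. This yields the splitting
\[ \bigwedge^k \C^N \;=\; Ker(\theta_k)\oplus \sigma\bigl(\bigwedge^{k-2}\C^N\bigr) \]
of $\s$-modules. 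Induction on $k$ together with dimension counting via Weyl's formula then matches $\dim Ker(\theta_k) = \binom{N}{k}-\binom{N}{k-2}$ with $\dim V(\delta_k)$, forcing $Ker(\theta_k)\cong V(\delta_k)$. The delicate step is checking the simultaneous nonvanishing of these scalars across all inductive strata; this reduces to an explicit highest-weight-vector computation that is manageable but the main source of bookkeeping in the argument.
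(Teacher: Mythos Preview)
The paper does not give its own proof of this statement; it is quoted from Carter \cite[Theorem~13.28]{C05} and used as a black box. Your outline is essentially the classical argument one finds in that reference, so there is nothing to compare against on the paper's side.

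Two remarks on the proposal itself. In part~(i), the step ``$\C^N$ is a cyclic $\s$-module, hence irreducible'' is not a valid inference as written: cyclic modules need not be irreducible (Verma modules are cyclic). What makes the argument work is that $\C^N$ is finite-dimensional, hence completely reducible, and the $\delta_1$-weight space $\C e_1$ is one-dimensional; any decomposition into irreducibles would force each summand to contain a nonzero $\delta_1$-weight vector, so there is only one summand. Alternatively, compare dimensions: once $\C^N$ is a cyclic highest weight module of highest weight $\delta_1$, it surjects onto $V(\delta_1)$, and $\dim V(\delta_1)=2n=N$ forces equality.

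For part~(ii), your plan via the $\s$-equivariant map $\sigma(v)=\omega\wedge v$ and the resulting splitting $\bigwedge^k\C^N = Ker(\theta_k)\oplus\sigma(\bigwedge^{k-2}\C^N)$ is exactly the standard route. The commutator identity $\theta_k(\omega\wedge v)=c_{n,k}\,v+\omega\wedge\theta_{k-2}(v)$ with $c_{n,k}\neq 0$ for $2\le k\le n$ is what makes the induction run; this is precisely the ``delicate step'' you flag, and it does go through. Once the splitting is established, the dimension identity $\dim Ker(\theta_k)=\binom{N}{k}-\binom{N}{k-2}$ together with the uniqueness of the dominant weight $\delta_k$ in $Ker(\theta_k)$ (multiplicity one, same argument as in~(i)) gives irreducibility without needing Weyl's dimension formula separately.
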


Our second main result is the following theorem

    \begin{theorem}\label{thm1kn}
        $F^{\alpha,\beta}(V(\delta_k))$ is reducible $\Ha$-module for any $1\leq k\leq n$.
    \end{theorem}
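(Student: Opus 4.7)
The plan is to exhibit an explicit proper nonzero $\Ha$-submodule of $F^{\alpha,\beta}(V(\delta_k))$ for each $1\leq k\leq n$, by means of a de Rham-type differential on twisted wedge-forms. The key tool is the operator
\[
d_\alpha: \bigwedge^{k-1}(\C^N)\otimes A_N \longrightarrow \bigwedge^k(\C^N)\otimes A_N, \qquad d_\alpha(u\otimes t^s) = (s+\alpha)\wedge u\otimes t^s.
\]
A direct computation shows that $d_\alpha$ intertwines the Shen-Larsson $\Ha$-actions: the extra contributions produced by the $r\bar{r}^t$ part of $h_r\cdot((s+\alpha)\wedge u\otimes t^s)$ all contain a fresh wedge factor of $r$, and they are absorbed using the two identities $(s+\alpha) = (r+s+\alpha) - r$ and $r\wedge r = 0$. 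Moreover $d_\alpha$ commutes with the degree derivations and satisfies $d_\alpha^2 = 0$.

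For $k=1$ the target is already $F^{\alpha,\beta}(V(\delta_1)) = \C^N\otimes A_N$, and the image equals $\mathrm{span}_{\C}\{(s+\alpha)\otimes t^s : s\in\Z^N\}$; taking any $s$ with $s+\alpha\neq 0$ shows this submodule is nonzero, while its weight multiplicities are at most one, so it is a proper submodule of the $N$-weight-multiplicity ambient module.

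For $k\geq 2$ I would invoke the $\s$-module decomposition $\bigwedge^k(\C^N) = V(\delta_k)\oplus\Omega\wedge\bigwedge^{k-2}(\C^N)$ with $\Omega\in\bigwedge^2\C^N$ the symplectic form. Since the Shen-Larsson functor is exact and respects direct sums of $\s$-modules, the induced projection $\pi_k: F^{\alpha,\beta}(\bigwedge^k\C^N)\to F^{\alpha,\beta}(V(\delta_k))$ is $\Ha$-equivariant. Then the composition $\pi_k\circ d_\alpha$, restricted to the summand $F^{\alpha,\beta}(V(\delta_{k-1}))$ of $F^{\alpha,\beta}(\bigwedge^{k-1}\C^N)$, is an $\Ha$-homomorphism into $F^{\alpha,\beta}(V(\delta_k))$ whose image is the candidate proper nonzero submodule. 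Nontriviality can be verified on a highest weight vector of $V(\delta_{k-1})$ for a suitable choice of $s\in\Z^N$.

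The hardest part will be establishing properness of this image in every case. For $k$ with $\dim V(\delta_{k-1}) < \dim V(\delta_k)$ the weight-multiplicity bound gives properness at once, but in the edge cases (notably $k=n$ with $n\geq 4$, where the dimension inequality reverses) a finer argument is needed. There I would complement the construction with an $\Ha$-equivariant codifferential $\delta_\alpha(v\otimes t^s) = \iota_{s+\alpha}(v)\otimes t^s$ built from the symplectic interior product $\iota_X(v_1\wedge\cdots\wedge v_k) = \sum_{i}(-1)^{i+1}(X,\bar{v}_i)\,v_1\wedge\cdots\hat{v}_i\cdots\wedge v_k$; its $\Ha$-equivariance rests on the identity $(r,\bar{r})=0$ together with the graded commutator relation between $r\bar{r}^t$ and $\iota$. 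Either the kernel of $\delta_\alpha$ on $F^{\alpha,\beta}(V(\delta_k))$, or (when $k<n$) the image of $\delta_\alpha:F^{\alpha,\beta}(V(\delta_{k+1}))\to F^{\alpha,\beta}(V(\delta_k))$, should then supply a proper nonzero submodule in the remaining cases.
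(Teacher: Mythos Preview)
Your differential $d_\alpha$ is exactly the right tool, and your $k=1$ argument coincides with the paper's. For $k\geq 2$ the paper also builds its submodule from the graded pieces of $\mathrm{im}\,d_\alpha$, but with one simplification that sidesteps your edge cases: rather than restricting $d_\alpha$ to $V(\delta_{k-1})$ and then projecting to $V(\delta_k)$, it takes the \emph{full} image $W_s^k:=(s+\alpha)\wedge\bigwedge^{k-1}\C^N$ at each degree and \emph{intersects} with $V(\delta_k)=\ker\theta_k$, setting
\[
P=\bigoplus_{s\in\Z^N}\bigl(W_s^k\cap\ker\theta_k\bigr)\otimes t^s.
\]
Your own computation (the identity $(s+\alpha)=(r+s+\alpha)-r$ together with $r\wedge r=0$) already shows that $h_r$ carries $W_s^k$ into $W_{r+s}^k$; since $\ker\theta_k$ is $\s$-stable, $P$ is an $\Ha$-submodule without any projection step. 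With this construction properness is immediate from a dimension count $\dim V(\delta_k)>\dim W_s^k=\binom{N-1}{k-1}$ (or more directly: $e_1\wedge\cdots\wedge e_k\in\ker\theta_k\setminus W_s^k$ whenever $s+\alpha\notin\mathrm{span}\{e_1,\dots,e_k\}$), valid for every $2\leq k\leq n$. The work shifts to \emph{nonzeroness}, which the paper settles by an explicit construction: choose $v_1,\dots,v_{k-1}$ iteratively so that each is symplectically orthogonal to $s+\alpha$ and to the previous $v_j$'s; then $(s+\alpha)\wedge v_1\wedge\cdots\wedge v_{k-1}$ lies in $W_s^k\cap\ker\theta_k$.

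Your restrict-then-project route is not wrong in principle, but it forces the comparison $\dim V(\delta_{k-1})<\dim V(\delta_k)$, which already fails at $k=n$ for $n\geq 3$ (equality at $n=3$, reversal for $n\geq 4$). Your codifferential rescue is left as a hope (``should then supply'') rather than an argument, and verifying that $\ker\delta_\alpha\cap V(\delta_k)$ is both nonzero and proper at every degree runs into the same kind of dimension obstruction. Switching from ``restrict source, project target'' to ``full source, intersect target'' eliminates the edge cases entirely and makes the codifferential unnecessary.
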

    
\begin{proof} Consider the case $k=1$ when $V(\delta_1)\cong \C^N$
 by Theorem~\ref{cart}. The subspace $\C\{(r+\alpha)\otimes t^r:r\in \Z^N\}$ is a nonzero proper submodule of $F^\alpha(V(\delta_1))$ proving its reducibility.

    Now we assume that $k\geq2$. We have that $$dim V(\delta_k)= {{N}\choose{k}}-{N\choose{k-2}}.$$
    
   {\bf Claim 1}.   $$dim V(\delta_k) >
{2n-1\choose{k-1}}$$ for any $2\leq k\leq n$.\\

{\bf Proof of Claim 1.}
   We see that $$\frac{{2n\choose{k}}- {2n\choose{k-2}}}{{2n-1\choose{k-1}}}=\frac{2n}{k-1}-\frac{2n(k-1)}{(2n-k+2)(2n-k+1)}.$$ 
   Since the first term is strictly greater than $2$ and $k\leq n$ we get $n+1\leq 2n-k+1$. So we will have $$\frac{2n(k-1)}{(2n-k+2)(2n-k+1)}\leq \frac{2n(k-1)}{(n+1)(n+2)}<2,$$ which implies the claim.\\

    For any $r\in \Z^N$ we consider the subspace $$W^k_r=\C(r+\alpha)\bigwedge (\bigwedge^{k-1}\C^N)\subseteq \bigwedge^k \C^N.$$ Then $dim W_r^k={N-1\choose{k-1}}$ if $\alpha\neq -r$ and $dim W_{-\alpha}^k=0$. By the Claim we have ${N\choose{k}}-{N\choose{k-2}}>{N-1\choose{k-1}}$ which implies that $W_r^k\cap Ker(\theta_k)\subsetneq Ker(\theta_k)$.

    \medskip

    {\bf Claim 2:} We will have $W_r^k\cap Ker(\theta_k)\neq 0$ for all $r\in \Z^N$ with $r+\alpha \neq 0$.\\

    {\bf Proof Claim 2.} 
        For any $v\in \C^N\setminus\{0\}$, we define the hyperplane $v^\perp=\{w\in \C^N:\bm{(}v,w\bm{)}=0\}$ and denote $L(v)=v^\perp \cap \bar{v}^\perp$. Then $dim \;L(v)\geq 2n-2$ for any $v\in \C^N$. Now let $v_1\in L(r+\alpha)$ be a nonzero vector and set $L(r+\alpha,v_1)=L(r+\alpha)\cap L(v_1)$. We get that $dim\; L(r+\alpha,v_1)\geq 2(n-2)$. Let $v_2\in L(r+\alpha,v_1)$. Then similarly we can define $L(r+\alpha,v_1,v_2)$, whose dimension is greater or equal than $2(n-3)$. Now continuing the process, we can construct $L(r+\alpha,v_1,v_2,\dots, v_{n-2})$, whose dimension is greater or equal than $2$. Let $v_{n-1}\in L(r+\alpha,v_1\dots, v_{n-2})$. Then it is easy to see that $0\neq (r+\alpha)\wedge v_{i_1}\wedge \dots v_{i_{k-1}}\in W_r^k\cap Ker(\theta_k)$ for $\{i_1,\dots i_{k-1}\}\subseteq \{1,2,\dots n-1\}$ and $1\leq k\leq n$.\\

    {\bf Case I:} Assume that $\alpha \notin \Z^N$.  In this case we  have $W_r^k\cap Ker(\theta_k)\neq 0$. Now take $$P=\C \{W_r^k\cap Ker(\theta_k)\otimes t^r:r\in \Z^N\}\subseteq F^\alpha(V(\delta_k)).$$ This is a nonzero proper $\Ha$-submodule of $F^{\alpha,\beta}(V(\delta_k))$.\\

    {\bf Case II:} Now assume that $\alpha\in \Z^N$ and consider a subspace $$P=\C \{W_r^k\cap Ker(\theta_k)\otimes t^r:r\in \Z^N\setminus\{-\alpha\}\}\oplus Ker(\theta_k)\otimes t^{-\alpha}.$$ This is again a nonzero  proper $\Ha-$submodule of $F^{\alpha,\beta}(V(\delta_k))$. This completes the proof of Theorem~\ref{thm1kn}.
\end{proof}

The main result of the paper follows from Theorem \ref{thm1}, Proposition \ref{prop1} and Theorem \ref{thm1kn}.

\section{Acknowledgments}
\noindent The authors are very grateful to the referee for helpfull comments and suggestions. V.Futorny is partially supported by NSF of China (12350710787 and 12350710178).

	\end{document}